\theoremstyle{plain} 
\newtheorem{theorem}{\indent\sc Theorem}[section]
\newtheorem{lemma}[theorem]{\indent\sc Lemma}
\newtheorem{remark}[theorem]{\indent\sc Remark}
\theoremstyle{definition}
\newtheorem{definition}[theorem]{\indent\sc Definition}
\newtheorem{example}[theorem]{\indent\sc Example}
\newcommand{\A}{\mathcal{A}}
\newcommand{\Q}{\mathbb{Q}}
\newcommand{\Z}{\mathbb{Z}}
\newcommand{\R}{\mathbb{R}}
\def\k{\textrm{\textup{\textmd{\textbf{k}}}}}
\def\l{\textrm{\textup{\textmd{\textbf{l}}}}}
\def\e{\textrm{\textup{\textmd{\textbf{e}}}}}
\font\sevency=wncyr7 \def\sh{\,\hbox{\sevency X}}
\font\sevency=wncyr7 \def\sh{\,\hbox{\sevency X}}
\def\wc{\circle{4}}
\def\bc{\circle*{4}}
\def\boxp{\text{\,\fbox{1+}\,}}
\def\boxc{\text{\,\fbox{1,}\,}}
\def\boxo{\text{\,\fbox{1}\,}}
\begin{document}
\title[On a generalization of restricted sum formula]{On a generalization of restricted sum formula for multiple zeta values and finite multiple zeta values}

\author[H. Murahara]{Hideki Murahara}
\address{
Nakamura Gakuen University\endgraf 
5-7-1 Befu Jonan-ku Fukuoka-shi, Fukuoka, 814-0198\endgraf 
Japan
} 
\email{hmurahara@nakamura-u.ac.jp}

\author[T. Murakami]{Takuya Murakami}
\address{
Graduate School of Mathematics\endgraf 
Kyushu University\endgraf 
744 Motooka Fukuoka-shi Fukuoka, 819-0395\endgraf 
Japan
}
\email{tak\_mrkm@icloud.com}

\keywords{Multiple zeta values, Finite multiple zeta values, Ohno's relation, Ohno-type relation, Derivation relation}
\subjclass[2010]{Primary 11M32; Secondary 05A19}
\begin{abstract}
 We prove a new linear relation for multiple zeta values.
 This is a natural generalization of the restricted sum formula proved by Eie, Liaw and Ong.
 We also present an analogous result for finite multiple zeta values. 
\end{abstract}
\maketitle

\section{Main results}
\subsection{Main result for multiple zeta values}
 For $k_1,\dots,k_r\in \Z_{\ge1}$ with $k_1 \ge 2$, the multiple zeta values (MZVs) and the multiple zeta-star values (MZSVs) are defined respectively by 
\begin{align*}
 \zeta(k_1,\dots, k_r):=\sum_{n_1>\cdots >n_r \ge 1} \frac {1}{n_1^{k_1}\cdots n_r^{k_r}}
\end{align*} 
and
\begin{align*}
 \zeta^\star (k_1,\dots, k_r):=\sum_{n_1\ge \cdots \ge n_r \ge 1} \frac {1}{n_1^{k_1}\cdots n_r^{k_r}}. 
\end{align*} 
They are both generalizations of the Riemann zeta values $\zeta(k)$ at positive integers. 

For an index $\k=(k_1,\dots,k_r)$, we call $|\k|:=k_1+\cdots+k_r$ the weight and $r$ the depth.
We write 
$\zeta^+ (k_1,\dots,k_r):=\zeta (k_1+1,k_2,\dots,k_r)$. 
For two indices $\k$ and $\l$, we denote by $\k+\l$ the index obtained by componentwise addition, and always assume implicitly the depths of both $\k$ and $\l$ are equal.
We also write $\l\geq 0$ if every component of $\l$ is a non-negative integer.
Our first main result is the following:

\begin{theorem} \label{main1}
For $(k_1,\dots,k_r)\in\mathbb{Z}_{\ge1}^r, t \in\mathbb{Z}_{\ge0}$, we have 
 \begin{align*}
  \begin{split}
   &\sum_{\substack{ m_1+\cdots +m_r=r+t \\ m_i\ge1 \, (1\le i\le r) }} \,
   \sum_{\substack{ |\bold{a}_{m_i}|=k_i+m_i-1 \\ (1\le i\le r)} }
   \zeta^+ (\bold{a}_{m_1},\dots,\bold{a}_{m_r}) \\
   &=\sum_{l=0}^{t}
   \sum_{\substack{ m_1+\cdots +m_{r-1}=t-l \\ m_i\ge0 \, (1\le i\le r-1) }} \,
   \sum_{ \substack{|\bold{e}|=l \\ \e\geq 0}} \,\,
   \zeta^+ ( (k_1,\{1\}^{m_1}, \dots ,k_{r-1}, \{1\}^{m_{r-1}} ,k_r) + \bold{e} ). 
  \end {split}
 \end{align*} 
Here and hereafter, each $\bold{a}_{m_i}$ denotes an $m_i$-tuple of positive integers.
 When $r=1$, we understand the R.H.S. as $\zeta^+(k_1+t)$.
\end{theorem}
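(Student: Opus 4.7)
The plan is to work inside the noncommutative polynomial algebra $\mathfrak{H}=\Q\langle x,y\rangle$, with subalgebra $\mathfrak{H}^0=\Q+x\mathfrak{H}y$ and evaluation $Z\colon\mathfrak{H}^0\to\R$ sending $x^{k_1-1}y\cdots x^{k_r-1}y$ to $\zeta(k_1,\dots,k_r)$. The base case $r=1$ is immediate from the Granville--Zagier sum formula: the LHS collapses to the sum of all admissible multiple zeta values of depth $1+t$ and weight $k_1+t+1$, which equals $\zeta(k_1+t+1)=\zeta^+(k_1+t)$, the RHS.

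For $r\ge 2$, I would package both sides into generating series in a formal variable $s$ marking $t$. Set $h:=(1-sx)^{-1}y$ and $g_i:=(1-u_ix)^{-1}y$, with auxiliary variables $u_1,\dots,u_r$. Recognizing Ohno's shift as the substitution $y\mapsto h$ and the $\{1\}^{m_i}$-insertion as $y\mapsto y/(1-sy)$, direct rearrangement (using the geometric series $\sum_{m\ge 1}(vg_i)^m=vg_i/(1-vg_i)$ and $v^r\prod g_i/(1-vg_i)$ to eliminate the shift by $r$) yields
\[
\sum_{t\ge 0} s^{t}\cdot\text{LHS}(t)=Z\!\left([u_1^{k_1-1}]\cdots[u_r^{k_r-1}]\,x\prod_{i=1}^{r}\frac{g_i}{1-sg_i}\right),
\]
\[
\sum_{t\ge 0} s^{t}\cdot\text{RHS}(t)=Z\!\left(x^{k_1}\frac{h}{1-sh}\,x^{k_2-1}\frac{h}{1-sh}\cdots x^{k_{r-1}-1}\frac{h}{1-sh}\,x^{k_r-1}h\right).
\]
The LHS series records words of fixed depth $r+t$ with the $k_i-1$ letters $x$ freely distributed among the $y$-slots of the $i$-th block, whereas the RHS series records words of varying depth with the $x$'s of $\k$ rigidly clustered into $r$ groups; the two are \emph{not} equal as formal elements of $\mathfrak{H}[[s]]$.

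The equality of the $Z$-values therefore must come from MZV relations, for which Ohno's relation is the natural candidate. My plan is to rewrite the RHS generating series as $Z(\mathcal{O}_s(xW_0))$, where $\mathcal{O}_s$ is the substitution $y\mapsto h$ and
\[
W_0:=x^{k_1-1}\tilde y\,x^{k_2-1}\tilde y\cdots x^{k_{r-1}-1}\tilde y\,x^{k_r-1}y,\qquad \tilde y:=y/(1-sy),
\]
and then to apply Ohno's relation in its generating-function form $Z(\mathcal{O}_s(w))=Z(\mathcal{O}_s(\tau(w)))$, where $\tau$ is the duality anti-automorphism swapping $x\leftrightarrow y$ and reversing. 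On the LHS I would apply duality termwise via $\zeta(\k')=\zeta((\k')^{\vee})$ for each admissible $\k'$ appearing in the expansion (which is the $l=0$ case of Ohno). After both moves, the statement should reduce to an identity between two explicit elements of $\mathfrak{H}[[s]]$. The main obstacle I foresee is this final matching: verifying that each admissible word of weight $|\k|+t+1$ appears with the same multiplicity on both sides amounts to a careful combinatorial enumeration of compositions with prescribed partial-sum constraints on the $x$-counts, where the permutation of the blocks induced by $\tau$ must be tracked precisely. A small example ($r=2$, $t=1$, $k_1=k_2=2$) that I have checked by hand confirms that Ohno's relation for $\k=(3,2)$ together with the duality $\zeta(3,1,2)=\zeta(2,3,1)$ furnishes exactly the needed identity, suggesting the general matching will be tractable once the bookkeeping is in place.
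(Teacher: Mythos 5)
Your setup is reasonable, and your instinct that the identity should follow from Ohno's relation (more precisely from its consequence, the derivation relation) is the same instinct behind the paper's alternative proof in Section 3; but the proposal stops exactly where the proof begins. The step you label ``the main obstacle I foresee'' --- verifying that each admissible word appears with the same multiplicity on both sides after applying Ohno on one side and duality termwise on the other --- is not residual bookkeeping: it is the entire content of the theorem, and nothing in the proposal establishes it. The paper's Section 3 shows what this matching actually requires: one introduces the shuffled indices $\k_u:=(k_1,((k_2,\dots,k_{r-1})\sh\{1\}^{u}),k_r)$ and proves the triangular relation $f(\k,t)=-\sum_{u=0}^{t}g(\k_u,t-u)$ together with its inverse $g(\k,t)=-\sum_{u=0}^{t}(-1)^{u}f(\k_u,t-u)$, where $g$ measures the failure of the Ohno-type relation in \emph{Hoffman-dual} form; the multiplicity count is then done by rewriting indices as words in $\boxp$ and $\boxc$ and applying Hoffman duality twice. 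Your hand-checked case $r=2$, $t=1$ cannot reveal this structure: there the sum over $u$ collapses to the single term $u=0$ (the $u=1$ term is the trivial $l=0$ instance of the Ohno-type relation), so the example reduces to exactly one instance of that relation. The need to combine the relations for all shuffled indices $\k_u$, $u=0,\dots,t$, with the multiplicities coming from $(\k_u)_{u'}=\binom{u+u'}{u}\k_{u+u'}$, only surfaces for larger $t$ and $r$. Until that identity (or an equivalent matching) is stated and proved, there is no proof.

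Two further cautions. First, the termwise ``duality'' you invoke on the LHS is written as $\zeta(\k')=\zeta((\k')^{\vee})$; in this paper $\vee$ denotes Hoffman's dual, for which that equality is false (e.g.\ $(2)^{\vee}=(1,1)$). You presumably intend the usual duality $\tau$ (the $l=0$ case of Ohno's relation), but the relation that actually drives the matching in the paper is the Ohno-type relation phrased with Hoffman's dual, $\sum_{|\e|=l}\zeta^{+}(\k+\e)=\sum_{|\e'|=l}\zeta^{+}((\k^{\vee}+\e')^{\vee})$, so the distinction is not cosmetic. Second, the paper's primary proof is entirely different and avoids this matching altogether: Theorem \ref{main1} is obtained as the $s=0$ case of Theorem \ref{main1-2}, which is proved by evaluating the Kaneko--Yamamoto integral-series identity on the $2$-poset $\mu((k_1,\dots,k_r,\{1\}^{s}),(\{1\}^{t+1}))$ and inducting on $t$; no appeal to Ohno's relation or duality is needed there.
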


\begin{remark}
Theorem \ref{main1} is equivalent to the derivation relation which was obtained by K.\ Ihara, M.\ Kaneko and D.\ Zagier \cite{ihara_kaneko_zagier_2006}.  
This equivalence will be explained in Section 3.
\end{remark}

\begin{remark}
We can deduce the sum formula
\[
\sum_{\substack{s_1+\dots+s_{u}=k\\s_1\geq2,s_i\geq1(2\leq i \leq u)}}\zeta (s_1,\dots,s_{u})= \zeta(k)
\]
from Theorem \ref{main1} by taking $r=1, k_1=k-u $ and $t=u-1$ for any positive integers $k$ and $u$ with $k-u\geq 1$. 

\end{remark}

\begin{example}
For $(k_1,k_2)=(1,2)$, $t=1$, we have 
\[
  2\zeta(2, 1, 2) + \zeta(2, 2, 1)=\zeta(2, 3) + \zeta(3, 2) + \zeta(2, 1, 2).
 \]
\end{example}

Theorem \ref{main1} is also equivalent to the following Theorem \ref{main1-2}. 
\begin{theorem} \label{main1-2}
For $(k_1,\dots,k_r)\in\mathbb{Z}_{\ge1}^r, s, t\in\mathbb{Z}_{\ge0}$, we have 
 \begin{align*}
  &\sum_{\substack{ m_1+\cdots +m_r=r+t \\ m_i\ge1 \, (1\le i\le r) }} \,
  \sum_{\substack{ |\bold{a}_{m_i}|=k_i+m_i-1 \\ (1\le i\le r)} }
  \zeta^+ (\bold{a}_{m_1},\dots,\bold{a}_{m_r},\{1\}^{s}) \\
  &=\sum_{l=0}^{t}
  \sum_{\substack{ m_1+\cdots +m_{r-1}=t-l \\ m_i\ge0 \, (1\le i\le r-1) }} \,
  \sum_{\substack{ |\bold{e}|=l \\ \e\geq 0}} \,\,
  \zeta^+ ( (k_1,\{1\}^{m_1},\dots ,k_{r-1},\{1\}^{m_{r-1}} ,k_r, \{1\}^{s}) + \bold{e} ). 
 \end{align*} 
  When $r=1$, we understand the R.H.S. as 
  $ \sum_{\substack{ |\bold{e}|=t \\ \e\geq 0}} \,\,
  \zeta^+ ( (k_1, \{1\}^{s}) + \bold{e} )$.
\end{theorem}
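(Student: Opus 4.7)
The plan is to deduce Theorem \ref{main1-2} from Theorem \ref{main1}. Setting $s=0$ in Theorem \ref{main1-2} recovers Theorem \ref{main1}, so only the forward implication requires proof. Write $L(\k,s,t)$ and $R(\k,s,t)$ for the left- and right-hand sides of Theorem \ref{main1-2}, and put $D:=L-R$.

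The first step is to apply Theorem \ref{main1} to the extended index $\k':=(k_1,\dots,k_r,\underbrace{1,\dots,1}_{s})$ of depth $r+s$, with the same parameter $t$, and to simplify both sides. On the left, for each $j\in\{1,\dots,s\}$ the block $\bold{a}_{m_{r+j}}$ has both weight and depth equal to $m_{r+j}$, so it is forced to equal $\{1\}^{m_{r+j}}$; the concatenation of these blocks is $\{1\}^u$ with $u:=m_{r+1}+\cdots+m_{r+s}$. Grouping by $u$ and using the $\binom{u-1}{s-1}$ compositions of $u$ into $s$ positive parts, the left-hand side of Theorem \ref{main1} applied to $\k'$ reorganizes as
\[
\sum_{j=0}^{t}\binom{s+j-1}{s-1}\,L(\k,s+j,t-j).
\]
A parallel calculation on the right uses that the substring $(\{1\}^{m_r},1,\{1\}^{m_{r+1}},1,\dots,1,\{1\}^{m_{r+s-1}},1)$ collapses to $\{1\}^{M+s}$ with $M:=m_r+\cdots+m_{r+s-1}$, together with the $\binom{M+s-1}{s-1}$ nonnegative compositions of $M$ into $s$ parts; setting $u=M+s$ yields
\[
\sum_{j=0}^{t}\binom{s+j-1}{s-1}\,R(\k,s+j,t-j).
\]

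Applying Theorem \ref{main1} to $\k'$ and subtracting then gives
\[
\sum_{j=0}^{t}\binom{s+j-1}{s-1}\,D(\k,s+j,t-j)=0\qquad(s\ge1,\ t\ge0).
\]
I would finish by induction on $t$: the $j=0$ coefficient equals $\binom{s-1}{s-1}=1$, while for $j\ge1$ the second argument $t-j$ is strictly less than $t$, so those terms vanish by the inductive hypothesis, isolating $D(\k,s,t)=0$. The base case $t=0$ leaves only the $j=0$ term, and $D(\k,0,t)=0$ for every $t$ is Theorem \ref{main1} itself.

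The main obstacle will be the combinatorial bookkeeping in these two reductions: one must correctly identify the maximal runs of forced $\{1\}$'s that glue into a single tail $\{1\}^u$ and verify the multiplicities $\binom{u-1}{s-1}$ on each side. Once those two identities are established, the triangular linear system in $D$ that finishes the argument is elementary.
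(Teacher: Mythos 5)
Your argument is correct, and it reaches the theorem by a genuinely different route from the paper's own proof of Theorem \ref{main1-2}, which instead applies the Kaneko--Yamamoto integral--series identity to $\k=(k_1,\dots,k_r,\{1\}^s)$ and $\l=(\{1\}^{t+1})$ and evaluates both sides of $\zeta(\mu(\k,\l))=\zeta(\k\circledast\l^\star)$. What you do is closer in spirit to the paper's separate proof that Theorem \ref{main1} implies Theorem \ref{main1-2}: there the authors append a single $1$ to the index, induct on $s$, and untangle the resulting relation $F((\k,1),s,t)=\sum_{u=0}^{t}F(\k,s+t-u+1,u)$ by an alternating telescoping sum; you instead append all $s$ ones at once, which collapses the forced blocks $\bold{a}_{m_{r+j}}=\{1\}^{m_{r+j}}$ into a single tail and produces the unitriangular system $\sum_{j=0}^{t}\binom{s+j-1}{s-1}D(\k,s+j,t-j)=0$, solved by induction on $t$. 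Your combinatorial bookkeeping is right: $\binom{u-1}{s-1}$ counts the positive compositions on the left, $\binom{M+s-1}{s-1}$ the non-negative ones on the right, the two coefficients agree after the substitution $u=s+j$, $M=j$, and the $j=0$ coefficient is $1$, so the induction closes (the $r=1$ convention also survives the reduction, since $\k'=(k_1,\{1\}^s)$ has depth $\ge 2$ for $s\ge1$). It is amusing that your final identity is essentially the one the paper extracts from the integral--series identity, with $\binom{s+j-1}{s-1}$ in place of $\binom{s+j}{s}$, and both arguments end with the same triangular induction on $t$. The one thing to flag is that your proof establishes only the implication from Theorem \ref{main1}; it is a complete proof of Theorem \ref{main1-2} only when combined with an independent proof of Theorem \ref{main1}, which the paper supplies in Section 3 via the equivalence with the Ohno-type relation. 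Within that framework your argument is a clean and somewhat more direct substitute for the paper's induction on $s$.
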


This is a generalization of the restricted sum formula obtained by M.\ Eie, W--C.\ Liaw and Y.\ L.\ Ong \cite{eie_liaw_ong_2009}. 
The case $r=1$ gives the original formula. 
The proof of Theorem \ref{main1-2} will be given in Section 2. 
Here, we prove the equivalence of Theorem \ref{main1} and Theorem \ref{main1-2}.
\begin{proof}[\indent\sc Proof of the equivalence of Theorem \ref{main1} and Theorem \ref{main1-2}]
It is clear that Theorem \ref{main1-2} implies Theorem \ref{main1} by setting $s=0$.
So, we prove that Theorem \ref{main1} implies Theorem \ref{main1-2}.
Write $G(\k,s,t)$ (resp. $H(\k,s,t)$) for the left-hand side (resp. the right-hand side) of Theorem \ref{main1-2} and let $F(\k,s,t):=G(\k,s,t)-H(\k,s,t)$.
We prove $F(\k,s,t)=0$ for  $\k\in\mathbb{Z}_{\ge1}^r, s, t\in\mathbb{Z}_{\ge0}$ by induction on $s$.
If $s=0$, then $F(\k,0,t)=0$ by Theorem \ref{main1}.
We assume $F(\k,s,t)=0$ for some $s\in\mathbb{Z}_{\ge0}$ and show $F(\k,s+1,t)=0$.
\begin{align*}
 G((\k,1),s,t)
 =&\sum_{\substack{ m_1+\dots +m_{r+1}=r+t+1 \\ m_i\ge1 \, (1\le i\le r+1) } }
 \sum_{\substack{ |\bold{a}_{m_i}|=k_i+m_i-1 \\ (1\le i\le r)} }
  \zeta^+ (\bold{a}_{m_1},\dots,\bold{a}_{m_r},\{1\}^{m_{r+1}+s})\\
 =&\sum^{t+1}_{m_{r+1}=1}
  \sum_{\substack{ m_1+\dots+m_r=r+t-m_{r+1}+1 \\ m_i\ge1 \, (1\le i\le r) } }
  \sum_{\substack{ |\bold{a}_{m_i}|=k_i+m_i-1 \\ (1\le i\le r)} }
  \zeta^+ (\bold{a}_{m_1},\dots,\bold{a}_{m_r},\{1\}^{m_{r+1}+s})\\
 =&\sum^{t+1}_{m_{r+1}=1}G(\k,s+m_{r+1}, t-m_{r+1}+1)\\
 =&\sum^t_{u=0}G(\k,s+t-u+1,u),
\end{align*}
\begin{align*}
 H((\k,1),s,t)
 =&\sum^t_{l=0} 
 \sum_{\substack{ m_1+\dots+m_r=t-l \\ m_i\ge0 \, (1\le i\le r) } } 
 \sum_{\substack{ |\bold{e}|=l \\ \e\geq 0}} \,\,
  \zeta^+ ( (k_1,\{1\}^{m_1},\dots ,\{1\}^{m_{r-1}} ,k_r, \{1\}^{m_r+s+1}) + \bold{e} ) \\
 =&\sum^t_{l=0}\sum^{t-l}_{m_r=0}
 \sum_{\substack{ m_1+\dots+m_{r-1}=t-l-m_r \\ m_i\ge0 \, (1\le i\le r-1) } }
 \sum_{\substack{ |\bold{e}|=l \\ \e\geq 0}} \,\,
  \zeta^+ ( (k_1,\{1\}^{m_1},\dots ,\{1\}^{m_{r-1}} ,k_r, \{1\}^{m_r+s+1}) + \bold{e} ) \\
 =&\sum^{t}_{m_r=0}
 \sum^{t-m_r}_{l=0}
 \sum_{\substack{ m_1+\dots+m_{r-1}=t-l-m_r \\ m_i\ge0 \, (1\le i\le r-1) } }
 \sum_{\substack{ |\bold{e}|=l \\ \e\geq 0}} \,\,
  \zeta^+ ( (k_1,\{1\}^{m_1},\dots ,\{1\}^{m_{r-1}} ,k_r, \{1\}^{m_r+s+1}) + \bold{e} ) \\
 =&\sum^{t}_{m_r=0}
 H(\k, s+m_r+1, t-m_r)\\
 =&\sum^t_{u=0} H(\k,s+t-u+1,u).
\end{align*}
Therefore, we have
\[
F((\k,1),s,t)=\sum^{t}_{u=0}F(\k,s+t-u+1,u).
\]
By replacing $s$ to $s+1$ and $t$ to $t-1$, we have
\[
F((\k,1),s+1,t-1)=\sum^{t-1}_{u=0}F(\k,s+t-u+1,u).
\]
Take subtraction of the previous two equations, we have
\[
F(\k,s+1,t)=F((\k,1),s,t)-F((\k,1),s+1,t-1).
\]
By applying this equation repeatedly and $F(\k,s,0)=0$ for arbitrary index $\k$ and $s\in\mathbb{Z}_{\ge0}$, we obtain the following equation.
\[
F(\k,s+1,t)=\sum^{t}_{t'=1}(-1)^{t'-1}F((\k,\{1\}^{t'}),s,t-t'+1).
\]
Therefore, we obtain the desired result.
\end{proof}

\subsection{Main result for finite multiple zeta values}
There are two types of finite multiple zeta value (FMZV) being studied variously;
$\A$-finite multiple zeta(-star) values ($\A$-FMZ(S)Vs) and symmetric multiple zeta(-star) values (SMZ(S)Vs). 

We consider the collection of truncated sums $\zeta_{p}(k_1, \dots, k_r)=\sum_{p>n_1>\cdots>n_r\ge1}\frac{1}{n_1^{k_1}\cdots n_r^{k_r}}$ modulo all primes $p$ in the quotient ring $\A=(\prod_{p}\Z/p\Z)/(\bigoplus_{p}\Z/p\Z)$, which is a $\Q$-algebra. 
Elements of $\A$ are represented by $(a_p)_p$, where $a_p\in\Z/p\Z$, and two elements $(a_p)_p$ and $(b_p)_p$ are identified if and only if $a_p=b_p$ for all but finitely many primes $p$.
For $k_1, \dots ,k_r \in \Z_{\ge1}$, the $\A$-FMZVs and the $\A$-FMZSVs are defined by
\begin{align*}
 \zeta_{\mathcal{A}} (k_1, \dots, k_r)
 &:=\biggl( \sum_{p>n_1>\cdots >n_r\ge1}
 \frac{1}{n_1^{k_1}\cdots n_r^{k_r}} \bmod{p} \biggr)_{p}, \\ 
 \zeta_{\mathcal{A}}^{\star} (k_1, \dots, k_r)&:=\biggl( \sum_{p>n_1\ge\cdots\ge n_r\ge1}
 \frac{1}{n_1^{k_1}\cdots n_r^{k_r}} \bmod{p} \biggr)_{p}. 
\end{align*}
The SMZ(S)Vs was first introduced by Kaneko and Zagier \cite{kaneko_2015, kaneko_zagier_2017}. 
For $k_1,\dots ,k_r \in \Z_{\ge1}$, we let  
\begin{align*}
 \zeta_{\mathcal{S}}^{\ast} (k_1, \dots, k_r)
 &:=\sum_{i=0}^{r}(-1)^{k_1+\cdots +k_i}\zeta^{\ast}(k_i, \dots , k_1) \zeta^{\ast}(k_{i+1}, \dots,k_r). 
\end{align*}
Here, the symbol $\zeta^{\ast}$ on the right-hand side stands for the regularized value coming from harmonic regularization,
i.e.,  a real value obtained by taking constant terms of harmonic regularization as explained in   \cite{ihara_kaneko_zagier_2006}. 
In the sum, we understand $\zeta^{\ast}(\emptyset )=1$.
Let $\mathcal {Z}_{\R}$ be the $\Q$-vector subspace of $\R$ spanned by $1$ and all MZVs, which is a $\Q$-algebra. 
Then, the SMZVs is defined as an element 
in the quotient ring $\mathcal {Z}_{\R}/(\zeta (2))$ by 
\[
 \zeta_{\mathcal{S}} (k_1, \dots,k_r):=\zeta_{\mathcal{S}}^{\ast} (k_1, \dots,k_r) \bmod \zeta(2). 
\]
For $k_1, \dots,k_r \in \Z_{\ge1}$, we also define the SMZSVs in $\mathcal {Z}_{\R}/(\zeta (2))$ by  
\[ 
 \zeta_{\mathcal{S}}^{\star} (k_1, \dots,k_r)
 :=\sum_{ \substack{\square \textrm{ is either a comma ``," } \\ \textrm{ or a plus ``+"}} } 
 \zeta_{\mathcal{S}}^{\ast}(k_1 \square \cdots \square k_r) \bmod  \zeta(2). 
\]   

Denoting $\mathcal {Z}_{\A}$ by the $\Q$-vector subspace of $\A$ spanned by $1$ and all $\A$-FMZVs, Kaneko and Zagier conjecture that there is an isomorphism between $\mathcal {Z}_{\A}$ and $\mathcal {Z}_{\R}/\zeta(2)$ as $\Q$-algebras such that $\zeta_{\A}(k_1, \dots , k_r)$ and $\zeta_{\mathcal{S}}(k_1, \dots , k_r)$ correspond with each other. 
(For more details, see \cite{kaneko_2015, kaneko_zagier_2017}.) 
In the following, the letter ``$\mathcal{F}$'' stands either for ``$\mathcal{A}$'' or ``$\mathcal{S}$''.
Now, we state our second main result:

\begin{theorem} \label{main2}
For $(k_1,\dots,k_r)\in\mathbb{Z}_{\ge1}^r, t \in\mathbb{Z}_{\ge0}$, we have 
 \begin{align*}
  &\sum_{\substack{ m_1+\cdots +m_r=r+t \\ m_i\ge1 \, (1\le i\le r) }} \,
  \sum_{\substack{ |\bold{a}_{m_i}|=k_i+m_i-1 \\ (1\le i\le r)} }
  \zeta_{\mathcal{F}} (\bold{a}_{m_1},\dots,\bold{a}_{m_r}) \\
  &=\sum_{l=0}^{t}
  \sum_{\substack{ m_1+\cdots +m_{r-1}=t-l \\ m_i\ge0 \, (1\le i\le r-1) }} \,
  \sum_{\substack{ |\bold{e}|=l \\ \e\geq 0}} \,\,
  \zeta_{\mathcal{F}} ( (k_1,\{1\}^{m_1},\dots ,k_{r-1}, \{1\}^{m_{r-1}} ,k_r) + \bold{e} ). 
 \end{align*} 
  When $r=1$, we understand the R.H.S. as $\zeta_{\mathcal{F}}(k_1+t)$.
\end{theorem}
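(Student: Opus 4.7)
My plan is to reduce Theorem \ref{main2} to the derivation relations for $\mathcal{F}$-FMZVs, mirroring the proof strategy for Theorem \ref{main1}. As indicated in the Remark after Theorem \ref{main1}, that theorem is equivalent to the Ihara--Kaneko--Zagier derivation relation; this equivalence, which will be proved in Section 3, is an identity in the noncommutative polynomial algebra $\mathfrak{H}^1 = \mathbb{Q} \oplus \mathfrak{H} y$ (with $\mathfrak{H} = \mathbb{Q}\langle x, y\rangle$) of the form $A_{\mathbf{k},t} - B_{\mathbf{k},t} = \sum_j \partial_{n_j}(w_j)$, where $A_{\mathbf{k},t}$ and $B_{\mathbf{k},t}$ encode the two sides of Theorem \ref{main1} and the $\partial_n$ are the Ihara--Kaneko--Zagier derivations.

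The plan is therefore to (i) transport this algebraic identity to the $\mathcal{F}$-setting via the evaluation map $\zeta_{\mathcal{F}} \colon \mathfrak{H}^1 \to \mathcal{Z}_{\mathcal{F}}$, so that $\zeta_{\mathcal{F}}(A_{\mathbf{k},t})$ and $\zeta_{\mathcal{F}}(B_{\mathbf{k},t})$ become the two sides of Theorem \ref{main2}, and (ii) apply the $\mathcal{F}$-derivation relation $\zeta_{\mathcal{F}}(\partial_n(w)) = 0$, which is known for both $\mathcal{F} = \mathcal{A}$ and $\mathcal{F} = \mathcal{S}$. Since the identity in $\mathfrak{H}^1$ is purely algebraic, step (i) involves no additional computation: the same elements $A_{\mathbf{k},t}$ and $B_{\mathbf{k},t}$ serve in both settings.

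The main obstacle is verifying the symmetric case $\mathcal{F} = \mathcal{S}$. There one must check that the Section 3 reduction is compatible with the harmonic-regularization procedure defining $\zeta_{\mathcal{S}}^{\ast}$, and that everything descends cleanly to the quotient by $\zeta(2)$. This is a standard technical point in the SMZV literature; once it is dispatched, both the $\mathcal{A}$- and $\mathcal{S}$-cases follow from the single algebraic identity. The boundary case $r = 1$ reduces, as for Theorem \ref{main1}, to a trivial telescoping yielding $\zeta_{\mathcal{F}}(k_1 + t)$.
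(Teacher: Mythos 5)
Your overall strategy---observing that the reduction of Theorem \ref{main1} to a known family of relations is purely combinatorial on indices, and therefore transports to the finite setting where one invokes the known $\mathcal{F}$-analogue---is exactly the strategy of the paper. But the specific relation you invoke is wrong, and this is a genuine gap, not a technicality. The derivation relation for FMZVs is \emph{not} ``$\zeta_{\mathcal{F}}(\partial_n(w))=0$'' for the same words $w$ and the same derivations $\partial_n$ as in the Ihara--Kaneko--Zagier setting. For instance, $\partial_1$ applied to the word for $\zeta(2)$ yields the Euler relation $\zeta(2,1)-\zeta(3)=0$, whereas the corresponding combination $\zeta_{\mathcal{A}}(2,1)-\zeta_{\mathcal{A}}(3)=\zeta_{\mathcal{A}}(2,1)=(B_{p-3}\bmod p)_p$ is not zero in $\mathcal{A}$. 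This is precisely why the FMZV derivation relation of Oyama--Murahara is stated in a modified form, and why your step (i) cannot work as described: the two sides of Theorem \ref{main1} carry $\zeta^+$ (every index has its first entry raised by $1$, i.e.\ every word carries an extra leading $x$), while the two sides of Theorem \ref{main2} carry plain $\zeta_{\mathcal{F}}$. So the elements of $\mathfrak{H}^1$ underlying the two theorems are \emph{not} the same, and you cannot simultaneously keep ``the same $A_{\mathbf{k},t}$, $B_{\mathbf{k},t}$'' and apply a relation of the naive form $\zeta_{\mathcal{F}}(\partial_n(w))=0$.

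The paper avoids this by routing through the Ohno-type relation instead of the derivation relation: Theorem \ref{ohno-type_FMZVs} (Oyama) is stated in exactly the same shape as Theorem \ref{ohno-type_MZVs} with $\zeta^+$ replaced by $\zeta_{\mathcal{F}}$, so the purely combinatorial Lemma \ref{lem} (which manipulates only formal sums of indices via Hoffman duality and the box notation) applies verbatim and gives $f(\k,t)=-\sum_{u=0}^{t}g(\k_u,t-u)$ with $\zeta_{\mathcal{F}}$ in place of $\zeta^+$; Oyama's theorem then says each $g(\k_u,t-u)$ vanishes. To repair your argument you would either need to quote the FMZV derivation relation in its correct form and redo the (nontrivial) passage from it to the Ohno-type statement---which is the content of the cited Horikawa--Oyama--Murahara paper, not something you get for free---or simply invoke Theorem \ref{ohno-type_FMZVs} directly as the paper does. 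Two smaller points: your worry about harmonic regularization in the $\mathcal{S}$-case is dissolved by the same substitution, since Theorem \ref{ohno-type_FMZVs} is already stated for both $\mathcal{F}=\mathcal{A}$ and $\mathcal{F}=\mathcal{S}$; and the case $r=1$ is not a ``trivial telescoping''---it is itself the full instance $\k=(k_1)$, $l=t$ of the Ohno-type relation (equivalently a sum formula for FMZVs), which for $\mathcal{F}=\mathcal{A}$ asserts the nontrivial vanishing of a sum of $\zeta_{\mathcal{A}}$'s of fixed weight and depth.
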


\begin{remark}
We can also obtain the FMZVs version of the restricted sum fomula by replacing $\zeta^+$ with $\zeta_{\mathcal{F}}$ in Theorem \ref{main1-2}. 
\end{remark}

\section{Proof of Theorem \ref{main1-2}}

\subsection{Integral Series Identity}
A 2-poset is a pair $(X,\delta_X)$, where $X=(X,\leq)$ is a finite partially ordered set and $\delta_X$ is a label map from $X$ to $\{0,1\}$. 
A 2-poset $(X,\delta_X)$ is called admissible if $\delta_X(x)=0$ for all maximal elements $x\in X$ and $\delta_X(x)=1$ for all minimal elements $x\in X$.

A 2-poset $(X,\delta_X)$ is depicted as a Hasse diagram in which an element $x$ with $\delta(x)=0$ (resp. $\delta(x)=1$) is represented by $\circ$ (resp. $\bullet$). For example, the diagram
\[
\begin{picture}(30,30)
	\put(0,0)\bc
	\put(0,0){\line(1,2){6}}
	\put(6,12)\bc
	\put(6,12){\line(1,2){6}}
	\put(12,24)\wc
	\put(12,24){\line(1,-2){6}}
	\put(18,12)\bc
	\put(18,12){\line(1,2){6}}
	\put(24,24)\wc
\end{picture}
\]
represents the 2-poset $X=\{x_1,x_2,x_3,x_4,x_5\}$ with order $x_1<x_2<x_3>x_4<x_5$ and label $(\delta_X(x_1),\dots,\delta_X(x_5))=(1,1,0,1,0)$. This 2-poset is admissible.

For an admissible 2-poset $X$, we define the associated integral
\[
I(X):=\int_{\Delta_X}\prod_{x\in X}\omega_{\delta_{X}(x)}(t_x),
\]
where
\[
\Delta_X:=\{(t_x)_x\in[0,1]^X | t_x<t_y \textrm{ if } x<y\}
\]
and
\[
\omega_0(t):=\frac{dt}{t}, \ \ \omega_1(t):=\frac{dt}{1-t}.
\]
For example, 
\[
I\left(
\begin{picture}(35,20)(-5,10)
	\put(0,0)\bc
	\put(0,0){\line(1,2){6}}
	\put(6,12)\bc
	\put(6,12){\line(1,2){6}}
	\put(12,24)\wc
	\put(12,24){\line(1,-2){6}}
	\put(18,12)\bc
	\put(18,12){\line(1,2){6}}
	\put(24,24)\wc
\end{picture}
\right)=\int_{t_1<t_2<t_3>t_4<t_5}\frac{dt_1}{1-t_1}\frac{dt_2}{1-t_2}\frac{dt_3}{t_3}\frac{dt_4}{1-t_4}\frac{dt_5}{t_5}.
\]

For indices $\k=(k_1,\dots,k_r)$ and $\l=(l_1,\dots,l_s)$, we define $\mu(\k,\l)$ as a 2-poset corresponds to the following diagram.
\[
\begin{picture}(86,110)
	\put(0,0)\bc
	\put(0,0){\line(1,2){6}}
	\put(6,12)\wc
	\qbezier[8](6,12)(9,18)(12,24)
%
	\put(12,24)\wc
	\qbezier[16](12,24)(18,36)(24,48)
	\qbezier(-3,1.5)(-2,14)(9,25.5)
	\put(-9,15){\tiny $k_r$}
	\put(24,48)\bc
	\put(24,48){\line(1,2){6}}
	\put(30,60)\wc
	\qbezier[8](30,60)(33,66)(36,72)
	\put(36,72)\wc
	\qbezier(21,49.5)(22,62)(33,73.5)
	\put(15,63){\tiny $k_1$}
	\put(36,72){\line(1,2){6}}
	\put(42,84)\wc
	\put(42,84){\line(1,2){6}}
	\put(48,96)\wc
	\qbezier[8](48,96)(51,102)(54,108)
	\put(54,108)\wc
	\qbezier(39,85.5)(40,98)(51,109.5)
	\put(33,99){\tiny $l_1$}
	\put(54,108){\line(1,-2){12}}
	\put(66,84)\bc
	\put(66,84){\line(1,2){6}}
	\put(72,96)\wc
	\qbezier[8](72,96)(73,102)(78,108)
	\put(78,108)\wc
	\qbezier(65.5,87.5)(65,99)(75,109.5)
	\put(60,100){\tiny $l_2$}
	\put(78,108){\line(1,-2){6}}
	\qbezier[8](90.5,96)(96,96)(102,96)
	\put(108,96){\line(1,-2){6}}
	\put(114,84)\bc
	\put(114,84){\line(1,2){6}}
	\put(120,96)\wc
	\qbezier[8](120,96)(123,102)(126,108)
	\put(126,108)\wc
	\qbezier(113.5,87.5)(113,99)(123,109.5)
	\put(108,100){\tiny $l_s$}
\end{picture}
\]

For an index $\k=(k_1,\dots,k_r)$, let $\k^\star$ be the formal sum of $2^{r-1}$ indices of the form $(k_1\square\cdots\square k_r)$, where each $\square$ is replaced by $``,"$ or $``+"$.

We also define the $\Q$-bilinear ``circled harmonic product'' $\circledast$ by 
\[
 (k_1,\dots,k_r)\circledast(l_1,\dots,l_s):=(k_1+l_1,(k_2,\dots,k_r)*(l_2,\dots,l_s)),
\]
where product ``$*$'' is the haronic product defined inductively by
\begin{align*}
&\qquad\qquad\varnothing*\k=\k*\varnothing=\k,
\\
&\k*\l=(k_1,\k'*\l)+(l_1,\k*\l')+(k_1+l_1,\k'*\l')
\end{align*}
for any indices $\k=(k_1,\k')$ and $\l=(l_1,\l')$.

Kaneko and Yamamoto proved the following formula for MZVs.
\begin{theorem}[Kaneko--Yamamoto\ \cite{kaneko_yamamoto_2016}] \label{integral_series_identity}
For any non-empty indices \k\ and \l, we have
\[
 \zeta(\mu(\k,\l))=\zeta(\k\circledast\l^\star).
\]
\end{theorem}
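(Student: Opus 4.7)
The plan is to prove the identity by reducing both sides to finite sums of ordinary multiple zeta values and matching them through a combinatorial bijection. The fundamental tool is the decomposition
\[
I(X) = \sum_{\sigma} I(X_\sigma),
\]
where $\sigma$ ranges over all linear extensions of the admissible 2-poset $X$ and $X_\sigma$ denotes the totally ordered 2-poset obtained by refining $X$ along $\sigma$ (keeping the same labels). This identity reflects the almost-disjoint decomposition of the order polytope $\Delta_X$ into the simplices $\Delta_{X_\sigma}$. Applied to $X=\mu(\k,\l)$, it writes the left-hand side as a sum of honest MZVs indexed by linear extensions.

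With this in hand, I would attack the right-hand side similarly. The symbol $\l^\star$ is a formal sum over the $2^{s-1}$ comma-or-plus patterns on $(l_1,\dots,l_s)$, and for each such pattern $\l'=(l'_1,\dots,l'_u)$ the circled harmonic product prefixes $k_1+l'_1$ and takes the ordinary harmonic product of $(k_2,\dots,k_r)$ with $(l'_2,\dots,l'_u)$. Expanding this harmonic product into its shuffle-and-merge sum writes $\zeta(\k\circledast\l^\star)$ as a finite sum of MZVs parametrised by (i) a comma-or-plus pattern on $\l$ and (ii) a shuffle-with-merges of the $\k$-tail against the $\l'$-tail.

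The heart of the argument is a bijection between linear extensions of $\mu(\k,\l)$ and the pairs described above. The $\k$-stem of $\mu(\k,\l)$ is itself a chain and admits no shuffling freedom, so a linear extension is determined by how the $\l$-branches interleave with one another and with the labels above the fork. The obligatory merger $k_1+l'_1$ in the circled product corresponds to the fork at the top of the $\k$-stem: the first $\circ$ of every $\l$-branch must appear immediately after the top of the $\k$-stem, forcing the $k_1$-block and the $l'_1$-block to fuse. Each ``valley'' $\bullet$ between the chains for $l_i$ and $l_{i+1}$ encodes the comma-versus-plus choice in $\l^\star$: if in the given linear extension the top of the $l_i$-chain is immediately followed by the valley element and then by the bottom of the $l_{i+1}$-chain, the two exponents merge into $l_i+l_{i+1}$ (a ``$+$''); otherwise they appear as separate entries (a ``,'').

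The main obstacle is the combinatorial bookkeeping at these valley elements, and in particular verifying that the bijection is compatible with the harmonic-product expansion of the tails, so that each resulting MZV appears on both sides with the same multiplicity. Once this matching is established, the identity follows term by term, since for any chain $X_\sigma$ obtained as above the iterated integral $I(X_\sigma)$ reduces, by the standard series expansion of $\omega_0$ and $\omega_1$, to the MZV attached to the corresponding index. The most efficient way to verify the bijection, I expect, is by induction on $s$: removing the top $\l$-branch reduces $\mu(\k,\l)$ to $\mu(\k,(l_1,\dots,l_{s-1}))$, and on the right-hand side this matches one step of unfolding the harmonic product, so the inductive step amounts to a single application of the series-level definition of $*$.
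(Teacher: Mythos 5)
The paper itself does not prove this statement---it is quoted from Kaneko--Yamamoto \cite{kaneko_yamamoto_2016}---so there is no internal proof to compare against; I can only judge your proposal on its own terms, and it has a fatal gap at its central step. The decomposition $I(X)=\sum_{\sigma}I(X_\sigma)$ over linear extensions is correct and does write $\zeta(\mu(\k,\l))$ as a sum of MZVs, but these are \emph{shuffle-type} terms, whereas expanding $\zeta(\k\circledast\l^\star)$ via the harmonic product produces \emph{stuffle-type} terms, and the two multisets of indices do not coincide in general, so the multiplicity-preserving bijection you are counting on does not exist. Concretely, take $\k=(1)$ and $\l=(1,1)$. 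Then $\mu(\k,\l)$ has three elements $x_1<x_2>x_3$ with labels $(1,0,1)$; both of its linear extensions give the word $110$, so the integral side decomposes as $\zeta(2,1)+\zeta(2,1)$. On the series side, $\l^\star=(1,1)+(2)$ and $\k\circledast\l^\star=(2,1)+(3)$, giving $\zeta(2,1)+\zeta(3)$. The index $(3)$ occurs with multiplicity $1$ on the right and $0$ on the left; the identity holds only because of Euler's relation $\zeta(2,1)=\zeta(3)$. This is precisely the double-shuffle phenomenon: an integral-side refinement and a series-side expansion of the same number need not match term by term, and the theorem you are trying to prove is itself a relation of that kind, so it cannot be established by matching indices.

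The proof that works stays entirely on the series side and never refines the poset into chains. Expand each $\omega_1$ in $I(\mu(\k,\l))$ as a geometric series and integrate the poset integral directly. Because the $l_1$ block of $\circ$'s sits immediately on top of the $k_1$ block, the apex variable $n$ acquires the exponent $k_1+l_1$, and the valley $\bullet$'s force non-strict inequalities on the $\l$-side, yielding
\[
 I(\mu(\k,\l))=\sum_{n\ge1}\frac{1}{n^{k_1+l_1}}
 \Bigl(\sum_{n>n_2>\cdots>n_r\ge1}\ \prod_{i=2}^{r}n_i^{-k_i}\Bigr)
 \Bigl(\sum_{n\ge m_2\ge\cdots\ge m_s\ge1}\ \prod_{j=2}^{s}m_j^{-l_j}\Bigr).
\]
Multiplying the two truncated sums that share the outer variable $n$ is exactly the harmonic product, and the non-strictness is what produces the $\star$, i.e.\ the formal sum over comma-or-plus patterns; summing over $n$ then gives $\zeta(\k\circledast\l^\star)$ on the nose. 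Your instinct to induct on $s$ by peeling off the last $\l$-branch is reasonable, but the induction must be run on these truncated series (one application of the harmonic product of partial sums per step), not on a term-by-term identification of MZV indices.
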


\subsection{Proof of Theorem \ref{main1-2}}

For $\k=(k_1,\dots,k_r,\{1\}^s)$ and $\l=(\{1\}^{t+1})$, we have
\begin{align*}
\zeta(\mu(\k,\l))=&I\left(
\begin{picture}(86,66)(-7,52)
	\put(0,0)\bc
	\qbezier[8](0,0)(3,6)(6,12)
	\put(6,12)\bc
	\qbezier(-3,1.5)(-3,9)(3,13.5)
	\put(-6,8){\tiny $s$}
	\put(6,12){\line(1,2){6}}
	\put(12,24)\bc
	\put(12,24){\line(1,2){6}}
	\put(18,36)\wc
	\qbezier[8](18,36)(21,42)(24,48)
	\put(24,48)\wc
	\qbezier(9,25.5)(10,38)(21,49.5)
	\put(3,39){\tiny $k_r$}
	\qbezier[8](24,48)(27,54)(30,60)
	\qbezier[8](30,60)(33,66)(36,72)
	\put(36,72)\bc
	\put(36,72){\line(1,2){6}}
	\put(42,84)\wc
	\qbezier[8](42,84)(45,90)(48,96)
	\put(48,96)\wc
	\qbezier(33,73.5)(34,86)(45,97.5)
	\put(27,87){\tiny $k_1$}
	\put(48,96){\line(1,2){6}}
	\put(54,108)\wc
	\put(54,108){\line(1,-2){6}}
	\put(60,96)\bc
	\qbezier[8](60,96)(63,90)(66,84)
	\qbezier[8](66,84)(69,78)(72,72)
	\put(72,72)\bc
	\qbezier(75,73.5)(74,86)(63,97.5)
	\put(75,87){\tiny $t$}
\end{picture}
\right)=\sum_{\substack{m_1+\dots+m_r+j=r+t\\(m_i\geq 1,j\geq0)}}I\left(
\begin{picture}(90,50)(-5,50)
	\put(40,95)\wc
	\put(30,90)\wc
	\put(30,80)\wc
	\put(50,90)\bc
	\put(50,80)\bc
	\put(40,75)\bc
	\put(40,95){\line(-2,-1){10}}
	\put(40,95){\line(2,-1){10}}
	\qbezier[6](30,90)(30,85)(30,80)
	\qbezier[6](50,90)(50,85)(50,80)
	\qbezier(27,90)(24,85)(27,80)		\put(0,82.5){\tiny $k_1-1$}
	\qbezier(53,90)(56,85)(53,80)		\put(56,82.5){\tiny $m_1-1$}
	\put(30,80){\line(2,-1){20}}
	\put(50,80){\line(-2,-1){20}}
	\put(30,70)\wc
	\put(30,60)\wc
	\put(50,70)\bc
	\put(50,60)\bc
	\put(40,55)\bc
	\qbezier[6](30,70)(30,65)(30,60)
	\qbezier[6](50,70)(50,65)(50,60)
	\qbezier(27,70)(24,65)(27,60)		\put(0,62.5){\tiny $k_2-1$}
	\qbezier(53,70)(56,65)(53,60)		\put(56,62.5){\tiny $m_2-1$}
	\put(30,60){\line(2,-1){10}}
	\put(50,60){\line(-2,-1){10}}
	\qbezier[6](40,55)(40,50)(40,45)
	\put(40,45)\bc
	\put(30,40)\wc
	\put(30,30)\wc
	\put(50,40)\bc
	\put(50,30)\bc
	\put(40,25)\bc
	\put(40,45){\line(-2,-1){10}}
	\put(40,45){\line(2,-1){10}}
	\qbezier[6](30,40)(30,35)(30,30)
	\qbezier[6](50,40)(50,35)(50,30)
	\qbezier(27,40)(24,35)(27,30)		\put(0,32.5){\tiny $k_r-1$}
	\qbezier(53,40)(56,35)(53,30)		\put(56,32.5){\tiny $m_r-1$}
	\put(30,30){\line(2,-1){10}}
	\put(50,30){\line(-2,-1){10}}
	\put(40,25){\line(1,-1){10}}
	\put(40,25){\line(-1,-1){10}}
	\put(30,15)\bc
	\put(50,15)\bc
	\put(20,5)\bc
	\put(60,5)\bc
	\qbezier[8](30,15)(25,10)(20,5)
	\qbezier[8](50,15)(55,10)(60,5)
	\qbezier(28,17)(20,15)(18,7)	\put(17,15){\tiny $s$}
	\qbezier(52,17)(60,15)(62,7)	\put(62,15){\tiny $j$}
\end{picture}
\right)\\
=&\sum^t_{j=0}\binom{s+j}{s}
\sum_{\substack{m_1+\dots+m_r=r+t-j \\ m_i\ge1 \, (1\le i\le r) }}
 \sum_{\substack{ |\bold{a}_{m_i}|=k_i+m_i-1 \\ (1\le i\le r)} }
  \zeta^+ (\bold{a}_{m_1},\dots,\bold{a}_{m_r},\{1\}^{s+j}).
\end{align*}

In general, for $\k'=(k_1,\dots,k_{r+s})$ and $\l=(\{1\}^{t+1})$, we have
\begin{align*}
 \zeta(\k'\circledast\l^\star)=\sum^t_{l=0}
 \sum_{\substack{m_1+\dots+m_{r+s}=r+s+t-l \\ m_i\ge1 \, (1\le i\le r) } }
 \sum_{\substack{|\e|=l \\ \e\geq 0}} \zeta^+((k_1,\{1\}^{m_1-1},\dots,k_{r+s},\{1\}^{m_{r+s}-1})+\e)
\end{align*}
because the index $(\{1\}^{t+1})^\star$ is equal to the formal sum of all indices of weight $t+1$.
Now, we put $k_{r+1}=\dots=k_{r+s}=1$ here. Then, the index 
$(k_1,\{1\}^{m_1-1},\dots,k_{r+s},\{1\}^{m_{r+s}-1})$ on the right becomes $(k_1,\{1\}^{m_1-1},\dots,k_{r-1},\{1\}^{m_{r-1}-1},k_r,\{1\}^{u-1})$ with $u=m_{r}+\dots+m_{r+s}$. For a fixed $u$, the number of $(s+1)$-tuple
$(m_r,\dots,m_{r+s})$ giving $u=m_r+\cdots+m_{r+s}$ is $\binom{u-1}{s}$.
%
Thus,
\begin{align*}
 \zeta(\k\circledast\l^\star)
=\sum^t_{l=0}\sum^{s+t-l+1}_{u=s+j}\binom{u-1}s
 \sum_{\substack{m_1+\dots+m_{r-1}=r+t+s-l-u\\m_i\geq1}}
 \sum_{\substack{|\e|=l \\ \e\geq 0}}\zeta^+((k_1,\{1\}^{m_1-1},\dots,k_{r},\{1\}^{u-1})+\e).
\end{align*}
By writing $u=s+j+1$,
\begin{align*}
 \zeta(\k\circledast\l^\star)
=\sum^t_{l=0}\sum^{t-l}_{j=0}\binom{s+j}s
 \sum_{\substack{m_1+\dots+m_{r-1}=r+t-l-j-1\\m_i\geq1}}
 \sum_{\substack{|\e|=l \\ \e\geq 0}}\zeta^+((k_1,\{1\}^{m_1-1},\dots,k_{r},\{1\}^{s+j})+\e)\\
 =\sum^{t}_{j=0}\binom{s+j}s\sum^{t-j}_{l=0}
 \sum_{\substack{m_1+\dots+m_{r-1}=r+t-l-j-1\\m_i\geq1}}
 \sum_{\substack{|\e|=l \\ \e\geq 0}}\zeta^+((k_1,\{1\}^{m_1-1},\dots,k_{r},\{1\}^{s+j})+\e).
\end{align*}

By integral-series identity and by induction on $t$, Theorem \ref{main1-2} follows.

\section{Alternative proof of Theorem \ref{main1}/Proof of Theorem \ref{main2}}
\subsection{Alternative proof of Theorem \ref{main1}}
The derivation relation for MZVs was first proved by Ihara, Kaneko and Zagier \cite{ihara_kaneko_zagier_2006}.
Y.\ Horikawa, K.\ Oyama and the second author \cite{horikawa_oyama_murahara_2017} showed the equivalence of the derivation relation and Theorem \ref{ohno-type_MZVs}. 
\begin{definition}
 For $\k =(k_1,\dots,k_r) \in \Z_{\ge1}^r$, we define Hoffman's dual index of $\k$ by
 \begin{align*}
  \k^{\vee}=(\underbrace{1,\dots,1}_{k_1}+\underbrace{1,\dots,1}_{k_2}+1,\dots,1+\underbrace{1,\dots,1}_{k_r}).
 \end{align*}
\end{definition} 
\begin{theorem}[Horikawa--Oyama--Murahara\ \cite{horikawa_oyama_murahara_2017}] \label{ohno-type_MZVs}
 For $\k \in \Z_{\ge1}^r$ and $l \in \Z_{\ge 0}$, we have
 \begin{align*}
  \sum_{\substack{|\e|=l \\ \e\geq 0}} \zeta^+ (\k+\e)
  =\sum_{\substack{|\e'|=l \\ \e'\geq 0}} \zeta^+ ((\k^\vee+\e')^\vee).
 \end{align*}
\end{theorem}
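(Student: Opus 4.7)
The plan is to reduce Theorem \ref{ohno-type_MZVs} to the Ihara--Kaneko--Zagier derivation relation. Recall that this asserts $Z\circ\partial_n=0$ for all $n\ge 1$, where $Z$ is the iterated-integral evaluation sending an admissible word to the corresponding MZV and $\partial_n$ is the derivation of the word algebra $\mathfrak{H}=\Q\langle x,y\rangle$ with $\partial_n(x)=x(x+y)^{n-1}y$ and $\partial_n(y)=-x(x+y)^{n-1}y$, extended by the Leibniz rule. Under the dictionary $\zeta(k_1,\dots,k_r)=Z(x^{k_1-1}y\cdots x^{k_r-1}y)$, $\zeta^+$ corresponds to prepending an $x$, and the Hoffman dual $\k\mapsto \k^{\vee}$ corresponds to the involution of $\mathfrak{H}$ swapping $x\leftrightarrow y$ at the first $|\k|-1$ letters of the word (keeping the terminal $y$ fixed). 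Both sides of the theorem then become $Z$ applied to explicit formal sums of words, and the goal becomes verifying the corresponding word identity modulo $\ker Z$.

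The first step is to rewrite the left-hand side: summing $\zeta^+(\k+\e)$ over $|\e|=l$, $\e\ge 0$ amounts to summing over the $\binom{l+r-1}{r-1}$ ways of inserting $l$ extra $x$'s among the $r$ $x$-blocks of the word $x\cdot w(\k)$, where $w(\k)=x^{k_1-1}y\cdots x^{k_r-1}y$. The second step is the analogous rewriting of the right-hand side: adding $\e'$ to $\k^{\vee}$ inserts $l$ extra $x$'s into $w(\k^{\vee})$, and re-applying the Hoffman dual transports these insertions back to $w(\k)$ via the $x\leftrightarrow y$ swap, producing a sum of words obtained from $w(\k)$ by a \emph{dual} insertion procedure. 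The third step is to assemble the two sides into generating series $L(u)=\sum_{l\ge 0}L_l\, u^l$ and $R(u)=\sum_{l\ge 0}R_l\, u^l$ in $\mathfrak{H}[[u]]$ and to exhibit $L(u)-R(u)$ as a $\Q[[u]]$-linear combination of the derivation images $\partial_n(w(\k))$ for $n\ge 1$. Extracting the coefficient of $u^l$ and invoking $Z\circ \partial_n=0$ then yields the theorem.

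The main obstacle I expect is the third step: establishing the combinatorial identity that expresses $L(u)-R(u)$ as an explicit $\partial_n$-image. Because $\partial_n$ acts on each letter of $w(\k)$ by the Leibniz rule and the factor $(x+y)^{n-1}$ expands binomially, the series $\sum_{n\ge 1}\partial_n(w(\k))\,u^n$ produces a large number of cross-terms; one has to show that these reorganize precisely into the discrepancy between the ``$x$-insertion'' and the ``dual-insertion'' sums. This requires careful bookkeeping of block boundaries together with the $x\leftrightarrow y$ swap induced by the Hoffman dual, and it is essentially the combinatorial content of the equivalence (asserted in the paper and due to \cite{horikawa_oyama_murahara_2017}) between the IKZ derivation relation and Theorem \ref{ohno-type_MZVs}. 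Once this matching is in place, comparing coefficients of $u^l$ delivers the identity in each weight at once.
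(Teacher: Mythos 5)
The paper does not actually prove Theorem \ref{ohno-type_MZVs}: it is imported wholesale from the reference \cite{horikawa_oyama_murahara_2017}, where it is obtained by showing its equivalence with the Ihara--Kaneko--Zagier derivation relation. Your proposal correctly identifies that route, and your preliminary translations are accurate (the dictionary $\zeta(k_1,\dots,k_r)=Z(x^{k_1-1}y\cdots x^{k_r-1}y)$, the description of $\zeta^+$ as prepending an $x$, the description of Hoffman duality as the $x\leftrightarrow y$ swap on the first $|\k|-1$ letters, and the reading of both sides as $x$-insertion sums). But the proposal stops exactly where the proof would have to begin. Your ``third step'' --- exhibiting $L(u)-R(u)$ as an explicit combination of derivation images and verifying the resulting word identity --- is not a technical detail to be deferred: given the derivation relation as input, that combinatorial identity \emph{is} the theorem. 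You acknowledge this yourself by writing that it ``is essentially the combinatorial content of the equivalence \dots due to \cite{horikawa_oyama_murahara_2017}.'' As written, the argument therefore proves nothing beyond what the citation already asserts; it is an outline of the cited proof, not a proof.

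A further point to be careful about if you do carry out step three: it is not clear that $L(u)-R(u)$ for a \emph{fixed} index $\k$ lies in the span of $\partial_n\bigl(w(\k)\bigr)$ for that same word. The equivalence established in \cite{horikawa_oyama_murahara_2017} is between the two \emph{families} of relations, and the passage in either direction may require derivation images of other admissible words (compare Lemma \ref{lem} of the present paper, where the analogous equivalence between Theorem \ref{main1} and Theorem \ref{ohno-type_MZVs} necessarily mixes the indices $\k_u$ for $0\le u\le t$, with binomial coefficients appearing through $(\k_u)_{u'}=\binom{u+u'}{u}\k_{u+u'}$). So even the shape of the ansatz in your third step needs justification before the bookkeeping can start. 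To make this a genuine proof you would need either to reproduce the combinatorial argument of \cite{horikawa_oyama_murahara_2017} in full, or to give an independent derivation, for instance via the integral--series identity of Kaneko--Yamamoto in the spirit of Section 2 of this paper.
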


In this subsection, we prove Theorem \ref{main1} by showing that it is equivalent to Theorem \ref{ohno-type_MZVs}, i.e., we will show the following.  
\begin{theorem}\label{thm3.3}
 Theorem \ref{main1} and Theorem \ref{ohno-type_MZVs} are equivalent.
\end{theorem}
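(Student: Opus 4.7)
The plan is to prove both implications via Hoffman duality at the level of the separator (comma--plus) encoding of indices. In this encoding, each admissible index is a word of $+$'s and $,$'s of length $(\text{weight}-1)$, Hoffman's dual $\mathbf{k}\mapsto\mathbf{k}^\vee$ acts by swapping the two symbols, the operation $\mathbf{k}\mapsto\mathbf{k}+\mathbf{e}$ corresponds to inserting $|\mathbf{e}|$ pluses into the plus-regions of the separator of $\mathbf{k}$, and $\mathbf{k}\mapsto(\mathbf{k}^\vee+\mathbf{e}')^\vee$ corresponds to inserting $|\mathbf{e}'|$ commas into the comma-regions. Both theorems can be read as statements about weighted sums of admissible separator sequences.

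For the direction $(\ref{ohno-type_MZVs})\Rightarrow(\ref{main1})$, I would fix $(l,m_1,\dots,m_{r-1})$ in the RHS of Theorem \ref{main1} and apply Theorem \ref{ohno-type_MZVs} to the inner sum with $\mathbf{k}'_{\mathbf{m}}:=(k_1,\{1\}^{m_1},\dots,k_r)$ and parameter $l$, rewriting the RHS of Theorem \ref{main1} as the triple sum $\sum_{l,\mathbf{m},\mathbf{e}'}\zeta^+\bigl(((\mathbf{k}'_{\mathbf{m}})^\vee+\mathbf{e}')^\vee\bigr)$. The task then reduces to a purely combinatorial multiplicity identity: for each admissible index $\mathbf{d}$ of weight $|\mathbf{k}|+t+1$ and depth $r+t$, writing the separator of $\zeta^+(\mathbf{d})$ after removing the leading $+$ in the form $,^{\alpha_0}+,^{\alpha_1}+\cdots+,^{\alpha_P}$ with $P=|\mathbf{k}|-r$, and setting $g_j:=(k_1+\cdots+k_j)-j$ and $s_g:=\#\{j\in\{1,\dots,r-1\}:g_j=g\}$, the multiplicity of $\zeta^+(\mathbf{d})$ on both sides equals $\prod_g\binom{\alpha_g}{s_g}$. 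On the LHS of Theorem \ref{main1}, this counts the placements of the $r-1$ boundary commas splitting the separator into chunks with plus counts $k_1-1,\dots,k_r-1$: at each gap $g$, the boundaries with $g_j=g$ occupy an increasing $s_g$-subset of the $\alpha_g$ commas there. On the transformed RHS, it counts the non-negative integer solutions of $\alpha_g=\bigl(\sum_{j:\,g_j=g}m_j\bigr)+s_g+b_g$ at each gap, where $b_g$ is the number of commas inserted at gap $g$, using that the merged comma run of $\mathbf{k}'_{\mathbf{m}}$ corresponding to gap $g$ already contains $\sum m_j+s_g$ commas before any insertion.

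For the converse $(\ref{main1})\Rightarrow(\ref{ohno-type_MZVs})$, I would proceed by induction on $l$. The base case $l=0$ is trivial. For the inductive step, I apply Theorem \ref{main1} with $t=l$. By the multiplicity identity above, which does not itself invoke Theorem \ref{ohno-type_MZVs}, the LHS of Theorem \ref{main1} equals $\sum_{l'=0}^{l}\sum_{\mathbf{m}:\,\sum m_j=l-l'}B_{l'}(\mathbf{k}'_{\mathbf{m}})$ with $B_{l'}(\mathbf{k}):=\sum_{|\mathbf{e}'|=l',\mathbf{e}'\geq 0}\zeta^+((\mathbf{k}^\vee+\mathbf{e}')^\vee)$, while the RHS equals the analogous sum with $A_{l'}(\mathbf{k}):=\sum_{|\mathbf{e}|=l',\mathbf{e}\geq 0}\zeta^+(\mathbf{k}+\mathbf{e})$ in place of $B_{l'}$. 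The inductive hypothesis cancels all terms with $l'<l$, leaving precisely $A_l(\mathbf{k})=B_l(\mathbf{k})$, which is Theorem \ref{ohno-type_MZVs} at $(\mathbf{k},l)$.

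The main obstacle I anticipate is verifying the multiplicity formula $\prod_g\binom{\alpha_g}{s_g}$ uniformly. The subtle case is when several consecutive $g_j$'s coincide---equivalently, when some interior $k_i=1$---causing several boundaries to merge into a single comma-region of $\mathbf{k}'_{\mathbf{m}}$. Handling these coincidences cleanly requires working directly with the separator structure rather than with an explicit closed-form expression for $(\mathbf{k}'_{\mathbf{m}})^\vee$.
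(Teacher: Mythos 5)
Your proposal is correct and follows essentially the same route as the paper: both directions rest on the formal (pre-$\zeta$) identity expressing each side of Theorem \ref{main1} as the corresponding side of Theorem \ref{ohno-type_MZVs} summed over the family $(k_1,\{1\}^{m_1},\dots,k_r)$ with $\sum_i m_i=t-l$ (the paper's $\k_u$), verified via the comma--plus encoding and Hoffman duality exactly as in your multiplicity count $\prod_g\binom{\alpha_g}{s_g}$. The only difference is minor: for the converse you cancel the lower-order terms by induction on $l$, whereas the paper derives an explicit alternating inversion formula $g(\k,t)=-\sum_{u=0}^{t}(-1)^u f(\k_u,t-u)$ by induction on $t$; both exploit the same triangularity.
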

\begin{remark}
 Tanaka \cite{tanaka_2015} shows that the restricted sum formula in \cite{eie_liaw_ong_2009} is written as the linear combination of the derivation relation. 
 Theorem \ref{thm3.3} is a generalization of this result.
\end{remark}

The implications among ``Generalized restricted sum formula'', ``Restricted sum formula'', ``Ohno type relation'' and ``Derivation relation'' for MZVs can be summarized as follows.
\vspace{10pt}

\hspace{3ex} \fbox{\begin{tabular}{c}Generalized restricted sum formula\\(Theorem \ref{main1}, Theorem \ref{main1-2})\end{tabular} } \ {\Large$\supset$} \ \fbox{Restricted sum formula (\cite{eie_liaw_ong_2009})}

\vspace{3pt}

\hspace{18ex} {\Large\rotatebox{90}{$\Leftrightarrow$}} \raisebox{1.2ex}{\small (This paper)}

\vspace{1pt}

\hspace{3ex} \fbox{Ohno type relation (\cite{horikawa_oyama_murahara_2017}, Theorem \ref{ohno-type_MZVs})}

\vspace{3pt}

\hspace{18ex} {\Large\rotatebox{90}{$\Leftrightarrow$}} \raisebox{1.2ex}{\small (\cite{horikawa_oyama_murahara_2017})}

\vspace{1pt}

\hspace{3ex} \fbox{Derivation relation (\cite{ihara_kaneko_zagier_2006})}
\vspace{10pt}
Now we prove Theorem \ref{thm3.3}. The case $r=1$ is obvious. For $r\geq 2$, the following Lemma \ref{lem} gives Theorem \ref{thm3.3}. 
We denote the naive shuffle of two indices $(k_1,\dots, k_r)$ and $(l_1,\dots, l_s)$ by $(k_1,\dots, k_r) \sh (l_1,\dots, l_s)$,
and we extend $\zeta$ linearly. 
For example, $(3,1) \sh (2)=(2,3,1) + (3,2,1) + (3,1,2)$.
For $\k=(k_1,\dots,k_r) \in \Z_{\ge1}^r$ with $r\ge 2$ and $u\in \Z_{\ge0}$, set $\k_{u}:=(k_1,( (k_2,\dots,k_{r-1}) \sh \{1\}^{u} ), k_r)$. 
  We also let
\begin{align*}
 f_L(\k,t):=&(\textrm{L.H.S. of Theorem \ref{main1} for } \k, t), & 
  f_R(\k,t):=&(\textrm{R.H.S. of Theorem \ref{main1} for } \k, t), \\
 g_L(\k,t):=&\sum_{\substack{|\e|=t \\ \e\geq 0}} \zeta^+ (\k+\e), & 
  g_R(\k,t):=&\sum_{\substack{|\e'|=t \\ \e'\geq 0}} \zeta^+ ((\k\,^\vee+\e')^\vee), \\
 f(\k,t):=&f_L(\k,t) - f_R(\k,t), & 
  g(\k,t):=&g_L(\k,t) - g_R(\k,t)
\end{align*}  
and we extend them linearly with respect to the indices.
Under these settings, we have the following:
\begin{lemma} \label{lem}
 For $\k \in \Z_{\ge1}^r$ with $r\geq 2$ and $t\in \Z_{\ge0}$, we have
 \begin{align*}
  f(\k,t)=&-\sum_{u=0}^{t} g(\k_u,t-u), \\
  g(\k,t)=&- \sum_{u=0}^{t}(-1)^u f(\k_u,t-u).   
 \end{align*} 
\end{lemma}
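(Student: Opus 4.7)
First, the two identities turn out to be equivalent via an inverse-transform argument, so it suffices to prove the first one. Define the linear operators $T_\pm h(\k, t) := \sum_{u=0}^{t}(\pm 1)^u h(\k_u, t-u)$. By associativity of $\sh$ together with $\{1\}^u \sh \{1\}^v = \binom{u+v}{u}\{1\}^{u+v}$, one has $(\k_u)_v = \binom{u+v}{u}\k_{u+v}$, and combining with $\sum_{u=0}^{w}(-1)^u\binom{w}{u} = [w=0]$ gives $T_- T_+ = T_+ T_- = \mathrm{id}$. Therefore the first identity $f = -T_+ g$ is equivalent to the second $g = -T_- f$, and I will focus on the first.

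Writing $f = f_L - f_R$ and $g = g_L - g_R$, the first identity is in turn equivalent to the pair of claims
\[
\text{(A)}\ \ T_+ g_L(\k, t) = f_R(\k, t), \qquad \text{(B)}\ \ T_+ g_R(\k, t) = f_L(\k, t).
\]
For (A), expand $\k_u = \sum_{m_1+\cdots+m_{r-1}=u,\,m_j\ge 0}(k_1, \{1\}^{m_1}, k_2, \dots, k_{r-1}, \{1\}^{m_{r-1}}, k_r)$, substitute into $g_L(\k_u, t-u) = \sum_{|\e|=t-u,\,\e\ge 0}\zeta^+(\k_u+\e)$, sum over $u$, and set $l = t-u$: the result is exactly the defining sum of $f_R(\k, t)$.

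Claim (B) is the heart of the proof. The plan is to rewrite both sides in the common form $\sum_{f \ge 0,\,|f|=t}C(f)\,\zeta^+\bigl((\k^\vee+f)^\vee\bigr)$ with the same coefficient. Setting $v_c := (\k^\vee)_c$ and $I := \{c : v_c \ge 2\}$ (the indices of the ``$+$''-blocks of $\k^\vee$), a bit-string analysis of Hoffman duality shows
\[
\k_u^\vee = \sum_{\substack{f \ge 0,\,|f|=u\\ \mathrm{supp}(f)\subseteq I}}\prod_{c\in I}\binom{f_c + v_c - 2}{f_c}\,(\k^\vee + f),
\]
because the shuffle of $\{1\}^u$ into the middle of $\k$ becomes, under Hoffman dual, a stretching of the ``1-runs'' in $\k^\vee$'s bit string. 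Substituting into $g_R(\k_u, t-u)$, summing over $u$, and applying the hockey-stick identity $\sum_{j=0}^{n}\binom{j+v-2}{j}=\binom{n+v-1}{n}$ componentwise on $I$ yields $C(f) = \prod_{c \in I}\binom{f_c + v_c - 1}{f_c}$. On the $f_L$ side, decompose the Hoffman dual as the ``sticky concatenation'' $(a_{m_1},\dots,a_{m_r})^\vee = (a_{m_1})^\vee \diamond (a_{m_2})^\vee \diamond \cdots \diamond (a_{m_r})^\vee$, where $A \diamond B$ merges the last component of $A$ with the first of $B$. Within each block $i$, the sums $\sum_{a_{m_i}} (a_{m_i})^\vee$ and $\sum_{|\e_i|=m_i-1,\,\e_i \ge 0}\bigl((k_i)^\vee + \e_i\bigr)$ agree, both enumerating compositions of $k_i + m_i - 1$ into $k_i$ positive parts. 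Distributing $\diamond$ over these sums gives $f_L(\k, t) = \sum_{\e_1,\dots,\e_r}\zeta^+\bigl(\diamond_{i=1}^r((k_i)^\vee + \e_i)\bigr)^\vee$; a direct computation identifies $\diamond_{i=1}^r((k_i)^\vee + \e_i) = \k^\vee + f(\e)$ for an explicit $f(\e) \ge 0$ with $|f|=t$, and counting $(\e_1,\dots,\e_r)$-preimages of each $f$ recovers the same $C(f)$.

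The main obstacle is the bookkeeping in the sticky-concatenation step when some interior $k_i$ equals $1$: then $G_i := (k_i)^\vee + \e_i$ has depth $1$, and successive $\diamond$-merges collapse into a single long ``glue chain'' feeding one component of $\k^\vee + f$. The key verification is that the glue chain at each $c \in I$ receives contributions from exactly $v_c$ of the $\e_{i,j}$'s, so the number of decompositions equals $\binom{f_c+v_c-1}{v_c-1}$, exactly matching the coefficient from the $g_R$ side; the rest is formal manipulation.
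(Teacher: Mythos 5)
Your proposal is correct, and its skeleton coincides with the paper's: both reduce the lemma to the two sub-claims $\sum_u g_L(\k_u,t-u)=f_R(\k,t)$ and $\sum_u g_R(\k_u,t-u)=f_L(\k,t)$, and both pass between the two displayed identities using $(\k_u)_v=\binom{u+v}{u}\k_{u+v}$ together with the alternating binomial sum (the paper phrases this as an induction on $t$, you as the operator identity $T_-T_+=\mathrm{id}$ --- the same computation). The genuine difference is in the hard sub-claim (B). The paper never introduces a common basis: it expands $\k_u$ over the compositions $\alpha_1+\cdots+\alpha_{r-1}=u$, writes each resulting index as a word in $\boxp,\boxc,\boxo$, applies Hoffman duality twice (once before and once after adding $\e'$), and then directly regroups the parameters $(\alpha_i,e_{i,j})$ with $\sum\alpha_i+\sum e_{i,j}=t$ into the blocks $\bold{a}_{m_1},\dots,\bold{a}_{m_r}$ of $f_L$; no binomial coefficients ever appear because the bijection is exhibited at the level of tuples. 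You instead normalize both sides into the basis $\{\zeta^+((\k^\vee+f)^\vee)\}_{|f|=t}$ and match the coefficient $C(f)=\prod_{c\in I}\binom{f_c+v_c-1}{f_c}$, which costs you two extra combinatorial lemmas --- the expansion of $\k_u^\vee$ with coefficients $\binom{f_c+v_c-2}{f_c}$ and the hockey-stick summation --- plus the sticky-concatenation analysis of $f_L$. I checked your two key formulas on several examples (including indices with interior entries equal to $1$, where the glue chains collapse) and they are right; your identification of $\binom{f_c+v_c-1}{v_c-1}$ as the number of ways the $v_c$ glued entries share $f_c$ is exactly what makes the two sides agree. What your route buys is a closed-form coefficient and a cleaner statement of \emph{why} the identity holds (both sides are the same linear combination of Ohno-type sums); what the paper's route buys is the avoidance of any counting, since the double-dual word manipulation realizes the equality term by term. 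The only cosmetic issue is your reuse of the letter $f$ for both the function $f(\k,t)$ and the exponent vector; rename the latter before writing this up.
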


\begin{proof}
To prove the first equation, it is sufficient to show $f_R(\k,t)=\sum_{u=0}^{t} g_L(\k_u,t-u)$ and $f_L(\k,t)=\sum_{u=0}^{t} g_R(\k_u,t-u)$.
 
The proof of the former is obvious as follows:
\begin{align*}
 f_R(\k,t)
 =&\sum_{l=0}^{t}\sum_{\substack{|\e|=l \\ \e\geq 0}} \,\,
   \zeta^+ ( (k_1,( (k_2,\dots,k_{r-1}) \sh \{1\}^{t-l} ), k_r) + \bold{e} ) \\
   =&\sum_{l=0}^{t}\sum_{\substack{|\e|=l \\ \e\geq 0}} \,\,\zeta^+ ( \k_{t-l} + \bold{e} )\\
   =&\sum_{l=0}^{t} \,\,g_L(\k_{t-l},l).
\end{align*}

To prove the latter, we denote  the $m$-fold repeation of ``$1+$''(resp.``$1,$'') by $\boxp^m$(resp. $\boxc^m$), and 1 by $\boxo$. For example, $\zeta (\boxp^3\boxc^2\boxp^0\boxo)=\zeta(1+1+1+1,1,1)=\zeta(4,1,1).$

 \begin{align*}
  g_R(\k_u,t-u)=&\sum_{\substack{|\e'|=t-u \\ \e'\geq 0}} \zeta^+ ((\k_u^\vee+\e')^\vee)\\
  =&\sum_{\substack{|\e'|=t-u \\ \e'\geq 0}} \zeta^+( ((k_1,( (k_2,\dots,k_{r-1}) \sh \{1\}^{u} ), k_r)^\vee +\e')^\vee) \\
  =&\sum_{\substack{\alpha_1+\dots +\alpha_{r-1}=u, \,\alpha_i\geq 0 \\ |\e'|=t-u,\, \e'\geq 0}} \zeta^+( ((k_1, \{1\}^{\alpha_1}, k_2, \{1\}^{\alpha_2}, \dots,k_{r-1},\{1\}^{\alpha_{r-1}},  k_r)^\vee +\e')^\vee) \\
\\
  =&\sum_{\substack{\alpha_1+\dots +\alpha_{r-1}=u, \, \alpha_i\geq 0 \\ |\e'|=t-u,\, \e'\geq 0}} 
  \zeta^+( ((\boxp^{k_1-1} \boxc^{\alpha_1+1} \boxp^{k_2-1} \boxc^{\alpha_2+1} \cdots \\
  & \hspace{5cm} \cdots \boxp^{k_{r-1}-1} \boxc^{\alpha_{r-1}+1} \boxp^{k_r-1} \boxo )^\vee +\e')^\vee )\\
  =&\sum_{\substack{\alpha_1+\dots +\alpha_{r-1}=u, \, \alpha_i\geq 0 \\ |\e'|=t-u,\, \e'\geq 0}} 
  \zeta^+( ((\boxc^{k_1-1} \boxp^{\alpha_1+1} \boxc^{k_2-1} \boxp^{\alpha_2+1} \cdots \\
  & \hspace{5cm} \cdots \boxc^{k_{r-1}-1} \boxp^{\alpha_{r-1}+1} \boxc^{k_r-1} \boxo ) +\e')^\vee) \\
  =&\sum_{\substack{\alpha_1+\dots +\alpha_{r-1}=u \\ e_{1,1}+\dots +e_{r,k_r-1}=t-u \\ \alpha_i\geq0,\, e_{i,j}\geq 0}} 
  \zeta^+((\boxp^{e_{1,1}}\boxc\cdots\boxp^{e_{1,k_1-1}}\boxc \ \boxp^{e_{1,k_1}+1}\\
  &\hspace{3.2cm}\boxp^{\alpha_1} \boxc\boxp^{e_{2,1}}\boxc\cdots\boxp^{e_{2,k_2-2}}\boxc  \boxp^{e_{2,k_2-1}+1} \\
  &\hspace{6cm}\cdots\cdots \\
  &\hspace{3.2cm}\boxp^{\alpha_{r-1}}\boxc\boxp^{e_{r,1}}\boxc\cdots\boxp^{e_{r,k_r-2}}\boxc \ \boxp^{e_{r,k_r-1}} \boxo
)^\vee)\\
%
%
  =&\sum_{\substack{\alpha_1+\dots +\alpha_{r-1}=u \\ e_{1,1}+\dots +e_{r,k_r-1}=t-u \\ \alpha_i\geq0,\,  e_{i,j}\geq 0}} \zeta^+(\boxc^{e_{1,1}}\boxp\cdots\boxc^{e_{1,k_1-1}}\boxp \ 
  \boxc^{e_{1,k_1}+1} \\
  &\hspace{3.2cm}\boxc^{\alpha_1} \boxp\boxc^{e_{2,1}}\boxp\cdots\boxc^{e_{2,k_2-2}}\boxp  \boxc^{e_{2,k_2-1}+1} \\
  &\hspace{6cm}\cdots\cdots \\
  &\hspace{3.2cm}\boxc^{\alpha_{r-1}}\boxp\boxc^{e_{r,1}}\boxp\cdots\boxc^{e_{r,k_r-2}}\boxp \ \boxc^{e_{r,k_r-1}} \boxo
).
 \end{align*} 
 
Take the sum over $u=0, \dots, t$, we have
 \begin{align*}
  \sum^t_{u=0}g_R(\k_u,t-u)
 %
  %
    =&\sum_{\substack{\alpha_1+\dots+\alpha_{r-1}+e_{1,1}+\dots +e_{r,k_r-1}=t \\ \alpha_i\geq 0, e_{i,j}\geq 0}} \zeta^+ (
  \underbrace{\boxc^{e_{1,1}}\boxp\cdots\boxc^{e_{1,k_1-1}}\boxp \  \boxc^{e_{1,k_1}+1}}_{\substack{weight=e_{1,1}+\dots+e_{1,k_1}+k_1 \\ depth=e_{1,1}+\dots+e_{1,k_1}+1}}\\
  &\hspace{2.2cm}\underbrace{\boxc^{\alpha_1}\boxp\boxc^{e_{2,1}}\boxp\cdots\boxc^{e_{2,k_2-2}}\boxp\boxc^{e_{2,k_2-1}+1}}_ {\substack{weight=\alpha_1+e_{2,1}+\dots+e_{2,k_2-1}+k_2 \\ depth=\alpha_1+e_{2,1}+\dots+e_{2,k_2-1}+1}}\\
  & \hspace{4cm}\cdots \cdots \\
  &\hspace{2.2cm}\underbrace{\boxc^{\alpha_{r-1}}\boxp\boxc^{e_{r,1}}\boxp\cdots\boxc^{e_{r,k_r-2}}\boxp \ \boxc^{e_{r,k_r-1}} \boxo}_{\substack{weight=\alpha_{r-1}+e_{r,1}+\dots+e_{r,k_r-1}+k_r \\ depth=\alpha_{r-1}+e_{r,1}+\dots+e_{r,k_r-1}+1}} ) \\
  =&\sum_{\substack{ m_1+\cdots +m_r=r+t \\ m_i\ge1 \, (1\le i\le r) }} \,
   \sum_{\substack{ |\bold{a}_{m_i}|=k_i+m_i-1 \\ (1\le i\le r)} }
  \zeta^+ (\bold{a}_{m_1},\dots ,\bold{a}_{m_r})\\
  =&f_L(\k,t).
 \end{align*} 

 We assume the first equation in the lemma and prove the second by induction on $t$.
 The case $t=0$ is clear. Let $t>0$ and assume $g(\k,t')=- \sum_{u=0}^{t'}(-1)^u f(\k_u,t'-u)$ for all integers $t'$ with $0\leq t'<t$.
 From the first equation, we have
 \begin{align*}
  g(\k,t)=&-f(\k,t)-\sum^t_{u=1}g(\k_u,t-u) \\
  =&-f(\k,t)+\sum^t_{u=1} \sum^{t-u}_{u'=0}(-1)^{u'}f((\k_u)_{u'},t-u-u').
 \end{align*}
 Since $(\k_u)_{u'}=\binom{u+u'}{u} \k_{u+u'}$ and by writing $v=u+u'$, 
 \begin{align*}
  g(\k,t)
  =&-f(\k,t) +\sum^t_{u=1} \sum^{t}_{v=u} (-1)^{v-u} \binom{v}{u} f(\k_{v},t-v)\\
  =&-f(\k,t) +\sum^t_{v=1} (-1)^v \sum^{v}_{u=1} (-1)^{u} \binom{v}{u} f(\k_{v},t-v)\\
  =&-f(\k,t) -\sum^t_{v=1}(-1)^{v} f(\k_{v},t-v)\\
  =&-\sum^t_{u=0}(-1)^{u}f(\k_{u},t-u).
 \end{align*}
\end{proof} 

\subsection{Proof of Theorem \ref{main2}}
The following theorem is called Ohno-type relation for FMZVs. 
This is conjectured by Kaneko \cite{kaneko_2015} and proved by Oyama \cite{oyama_2015}. 
\begin{theorem}[Oyama \cite{oyama_2015}] \label{ohno-type_FMZVs}
 For $\k \in \Z_{\ge1}^r$ and $l \in \Z_{\ge 0}$, we have
 \begin{align*}
  \sum_{\substack{|\e|=l \\ \e\geq 0}} \zeta_{\mathcal{F}} (\k+\e)
  =\sum_{\substack{|\e'|=l \\ \e'\geq 0}}\zeta_{\mathcal{F}} ((\k^\vee+\e')^\vee).
 \end{align*}
\end{theorem}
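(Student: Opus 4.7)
The plan is to establish Theorem \ref{ohno-type_FMZVs} separately for $\F=\A$ and $\F=\mathcal{S}$, in each case reducing it to the MZV version (Theorem \ref{ohno-type_MZVs}) via the structural features specific to that kind of FMZV.

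For $\F=\mathcal{S}$, I would start by expanding the defining bilinear formula
\[
\zeta_{\mathcal{S}}^{\ast}(\k+\e)=\sum_{i=0}^{r}(-1)^{(k_1+e_1)+\cdots+(k_i+e_i)}\,\zeta^{\ast}(k_i+e_i,\ldots,k_1+e_1)\,\zeta^{\ast}(k_{i+1}+e_{i+1},\ldots,k_r+e_r)
\]
and then sum over $\e$ with $|\e|=l,\ \e\geq 0$. After interchanging the order of summation and partitioning the budget $l=l_1+l_2$ between the two regularized factors, both sides of the target identity become convolutions of two Ohno-style sums on sub-indices of $\k$ and of $\k^\vee$. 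The regularized analog of Theorem \ref{ohno-type_MZVs}, which follows from the original proof together with the Ihara--Kaneko--Zagier regularization theorem, matches each such sub-sum with its Hoffman-dual counterpart. Reducing modulo $\zeta(2)$ then yields the $\mathcal{S}$-case in $\mathcal{Z}_{\R}/(\zeta(2))$.

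For $\F=\A$, I would argue directly with $p$-truncated sums modulo $p$. The key ingredients are: (i) Hoffman's reversal relation $\zeta_{\A}(k_1,\ldots,k_r)=(-1)^{|\k|}\zeta_{\A}(k_r,\ldots,k_1)$; (ii) the binary-word encoding $w(\k)=x_0^{k_1-1}x_1\cdots x_0^{k_r-1}x_1$, under which Hoffman duality corresponds to the letter-swap $x_0\leftrightarrow x_1$ and the Ohno sum corresponds to a weighted sum over insertions of $l$ additional $x_0$'s into $w(\k)$; and (iii) a partial-fraction decomposition of $1/(n^{a}(n+m)^{b})$ modulo $p$ that permits rewriting nested truncated sums. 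Combining (i)--(iii), both sides of the claim reduce to the same mod-$p$ sum indexed by the same set of binary words, proving the identity in $\A$.

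The main obstacle is the $\mathcal{S}$-case: one must verify that the Ohno-type relation, whose MZV form (Theorem \ref{ohno-type_MZVs}) uses the $\zeta^{+}$ shorthand and is of shuffle-type flavor, extends to the harmonically regularized values $\zeta^{\ast}$ entering the definition of $\zeta_{\mathcal{S}}^{\ast}$. This requires passing between the two regularizations via the Ihara--Kaneko--Zagier comparison polynomial, with careful bookkeeping of the resulting $\log T$ terms, and verifying that only pieces that die modulo $\zeta(2)$ remain after the reduction. A cleaner $\F$-uniform alternative would be to build a Seki--Yamamoto style connected sum whose $N\to\infty$ and $\bmod\,p$ specializations simultaneously yield Theorems \ref{ohno-type_MZVs} and \ref{ohno-type_FMZVs}, bypassing regularization issues altogether.
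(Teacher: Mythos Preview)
The paper does not prove Theorem \ref{ohno-type_FMZVs} at all: it is quoted as a known result of Oyama (and, via \cite{horikawa_oyama_murahara_2017}, equivalent to the derivation relation for FMZVs proved in \cite{murahara_2016}), and then used as a black box to deduce Theorem \ref{main2}. So there is no in-paper proof to compare your proposal against; you are attempting to supply an independent proof of a cited theorem.

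On its own terms, your sketch has a real structural gap in the $\mathcal{S}$-case. Your plan is to expand $\zeta_{\mathcal{S}}^{\ast}(\k+\e)$ into a sum over cut positions $i$, separate the budget $l=l_1+l_2$, and then match each piece with the corresponding piece of the expansion of $\zeta_{\mathcal{S}}^{\ast}((\k^\vee+\e')^\vee)$ via Theorem \ref{ohno-type_MZVs}. But Hoffman duality is not compatible with cutting at an index position: if $\k=(\k',\k'')$ is split at position $i$, then $(\k')^\vee$ and $(\k'')^\vee$ do not concatenate to $\k^\vee$ (their total depth is $|\k|-r+2$, one more than $\operatorname{dep}(\k^\vee)$; the boundary entries get merged under $\vee$). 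Consequently the ``Ohno-style sums on sub-indices of $\k^\vee$'' you need on the right-hand side do not arise from the $\zeta_{\mathcal{S}}^{\ast}$ expansion in the way you describe, and the convolution structures on the two sides do not line up piece by piece. You would need an additional identity reconciling this boundary defect, and that is essentially where the actual work in Oyama's proof (or in the derivation-relation approach of \cite{murahara_2016,horikawa_oyama_murahara_2017}) lives; it is not a bookkeeping issue that disappears modulo $\zeta(2)$.

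Your $\A$-case outline (reversal, binary-word encoding, partial fractions) names standard ingredients but does not indicate how they combine to give the identity; as written it is too schematic to be assessed. The ``connected sum'' alternative you mention at the end is in fact the cleanest known uniform route, but carrying it out is itself a nontrivial construction, not a corollary of what you have written.
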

\begin{remark}
 The derivation relation for FMZVs was conjectured by Oyama and proved by the second author \cite{murahara_2016}. 
 Horikawa, Oyama and the second author \cite{horikawa_oyama_murahara_2017} shows the equivalence of the derivation relation and the above theorem for FMZVs.
\end{remark}

By Theorem \ref{ohno-type_FMZVs}, we can prove Theorem \ref{main2} in exactly the same manner as in the previous subsection.
The relations among ``Generalized restricted sum formula'', ``Ohno type relation'' and ``Derivation relation'' for FMZVs can be summarized as follows.
\vspace{10pt}

\hspace{3ex} \fbox{Generalized restricted sum formula (Theorem \ref{main2})}

\vspace{3pt}

\hspace{18ex} {\Large\rotatebox{90}{$\Leftrightarrow$}} \raisebox{1.2ex}{\small (This paper)}

\vspace{1pt}

\hspace{8ex} \fbox{Ohno type relation (\cite{oyama_2015}, Theorem \ref{ohno-type_FMZVs})}

\vspace{3pt}

\hspace{18ex} {\Large\rotatebox{90}{$\Leftrightarrow$}} \raisebox{1.2ex}{\small (\cite{horikawa_oyama_murahara_2017})}

\vspace{1pt}

\hspace{8ex} \fbox{Derivation relation (\cite{murahara_2016})}

\vspace{10pt}

\section*{Acknowledgements}
The authors would like to thank Professor Masanobu Kaneko for valuable comments and suggestions.

\end{document}